\documentclass[12pt]{article}

 \usepackage{hyperref}

\usepackage{amsfonts, amsmath, amssymb, amsthm}
\usepackage{a4}

\usepackage{upgreek}




\DeclareMathOperator{\Ker}{Ker}
\DeclareMathOperator{\LinearImage}{Im}
\DeclareMathOperator{\rank}{rank}

\newtheorem{theorem}{Theorem}

\theoremstyle{remark}
\newtheorem*{remark}{Remark}
\newtheorem*{ir}{Important remark}
\newtheorem*{xmp}{Example}

\theoremstyle{definition}
\newtheorem{cnd}{Condition}


\author{I.G. Korepanov}
\title{Nonconstant hexagon relations and their cohomology}
\date{March 2018--January 2019}

\begin{document}

\sloppy

\maketitle

\begin{abstract}
A construction of hexagon relations---algebraic realizations of four-dimen\-sional Pachner moves---is proposed. It goes in terms of ``permitted colorings'' of 3-faces of pentachora (4-simplices), and its main feature is that the set of permitted colorings is \emph{nonconstant}---varies from pentachoron to pentachoron. Further, a cohomology theory is formulated for these hexagon relations, and its nontriviality is demonstrated on explicit examples.
\end{abstract}

\section{Introduction}\label{s:i}

There are some algebraic and combinatorial structures in topology and mathematical physics that, being already interesting in themselves, admit also very nontrivial cohomology theories, built over them in a natural way. The most studied example seems to be \emph{quandles}: they give, by themselves, invariants of \emph{knots} as well as their higher-dimen\-sional counterparts, and \emph{quandle cohomology}~\cite{CKS} gives a refined, more sensitive versions of these invariants.

Quandles are a particular case of \emph{Yang--Baxter maps}~\cite{dri,wei}, and these are further generalized to \emph{$n$-simplex maps}, or $n$-simplex relations, that also have a nontrivial cohomology~\cite{KST}. This cohomology is a very interesting direction of research that promises major progress in building and understanding `exactly solvable models' in statistical physics and quantum field theory.

These days, one more theory is appearing, if not completely parallel to, then certainly inspired by $n$-simplex cohomology: cohomology of algebraic realizations of Pachner moves~\cite{Pachner,Lickorish}---elementary rebuildings of a triangulation of a given piecewise linear (PL) manifold. The present paper is about a particular case of this cohomology, hopefully related to interesting invariants of four-dimen\-sional PL manifolds, or rather pairs ``such manifold, its middle cohomology class''. The main feature of our four-dimen\-sional Pachner move realizations, also called `hexagon relations', is that they are `nonconstant'---depending on parameters, in the same sense as there are, for instance, constant Yang--Baxter (YB) relations and YB relations depending on parameters.

We recall very briefly the definitions of the mentioned two sorts of cohomology here in Subsections \ref{ss:rs} and~\ref{ss:rg} (more on Pachner moves will be said in Section~\ref{s:g}). Then, in Subsection~\ref{ss:in} we explain the idea of `nonconstant' cohomology, and in Subsection~\ref{ss:ia} we outline the contents of the rest of the paper.

\subsection[Reminder of $n$-simplex cohomology]{Reminder of $\boldsymbol n$-simplex cohomology}\label{ss:rs}

The objects involved in $n$-simplex relations and their cohomology are as follows. First, an $n$-cube~$I^n$ is considered, whose $(n-1)$-faces can be `colored': each face is supplied with an element of a given `set of colors'~$X$. A coloring of the whole $n$-cube is thus an element of the direct product $X^{\times 2n}$ of $2n$ copies of~$X$. Then, a subset $R\subset X^{\times 2n}$ of `permitted colorings' is defined. The notion of permitted coloring can be extended also to an $N$-cube~$I^N$, with $N\ge n$: this is such a coloring of its $(n-1)$-faces that its restriction on any $n$-face is permitted. $N$-chains are formal linear combinations of permitted colorings of~$I^N$ with coefficients in~$\mathbb Z$ or other abelian group. The differential~$d$ acts on an $N$-chain~$c$, for $N\ge n$, as follows:
\begin{equation}\label{dc}
d(c)=\sum_{k=1}^N \bigl( c|_k^f-c|_k^r \bigr),
\end{equation}
It is assumed in~\eqref{dc} that all $(N-1)$-faces have been divided into pairs of \emph{parallel} faces. One face in each pair is called \emph{front} while the other---\emph{rear}, and $c|_k^f$ and $c|_k^r$ are the restrictions of~$c$ onto the front and rear faces in the $k$-th pair.

\begin{ir}
This construction implies, of course, that all such details as the rule describing the identification of any $n$-face with the `standard' $n$-cube, as well as the rule for choosing `front' and `rear' faces, must be provided. All these details can be found in~\cite{KST}.
\end{ir}

Then, $n$-simplex \emph{cohomology} is built from the homology with differential~\eqref{dc} in a standard way. Note that it deals with the infinite sequence of cubes:
\begin{equation}\label{cubes}
I^{n-1},\quad I^{n},\quad I^{n+1},\; \ldots,\;\; I^N,\; \ldots
\end{equation}

\subsection[Reminder of $(n+2)$-gon cohomology]{Reminder of $\boldsymbol{(n+2)}$-gon cohomology}\label{ss:rg}

On the other hand, there is a cohomology theory, called somewhat loosely \emph{$(n+2)$-gon cohomology}, that deals likewise with an infinite sequence of \emph{simplices}:
\begin{equation}\label{simplices}
\Delta^{n-1},\quad \Delta^{n},\quad \Delta^{n+1},\; \ldots,\;\; \Delta^N,\; \ldots
\end{equation}
Here, again, $(n-1)$-faces can be colored using some color set~$X$, and their colors around any $n$-simplex ($n$-face of an $N$-simplex, $N\ge n$) must be consistent, that is, belong to a given `permitted' subset~$R\subset X^{\times(n+1)}$. Then, $N$-chains are formal linear combinations of permitted colorings of~$\Delta^N$, and the differential~$d$ acts on an $N$-chain~$c$ in the following non-surprising way:
\begin{equation}\label{dg}
d(c)=\sum_{k=0}^N (-1)^k c|_k \,. 
\end{equation}
In~\eqref{dg}, it is assumed that the vertices of~$\Delta^N$ are numbered from $0$ to~$N$, and $c|_k$ is the restriction of~$c$ onto the $(N-1)$-face lying \emph{opposite} vertex~$k$; all these $(N-1)$-faces are then identified with the `standard'~$\Delta^{N-1}$ with vertices $0,\ldots,N-1$ in such way that the \emph{order} of vertices is conserved.

Cohomology is then constructed in a standard way, see~\cite{cubic} for details and applications.

\subsection{Nonconstant cohomology}\label{ss:in}

In both above constructions of $n$-simplex and $(n+2)$-gon cohomology, there is the following common feature: there is one and the same---\emph{constant}---subset~$R$ of permitted colorings for any $n$-cube or, respectively, $n$-simplex (which implies of course that there is a procedure for identifying this cube or simplex with one `standard' cube or simplex).

\begin{remark}
It may make sense to draw the reader's attention to the terminology: $n$-simplex relations and cohomology deal with \emph{cubes}, while $(n+2)$-gon relations and cohomology deal with \emph{simplices}!
\end{remark}

On the other hand, there has been shown~\cite{full-nonlinear} that there exist interesting \emph{hexagon} (\,=\,6-gon) relations, at least in their `fermionic' version, naturally parameterized by a simplicial \emph{2-cocycle}~$\omega$ given on the 5-simplex~$\Delta^5$. This means that each of the six 4-faces of~$\Delta^5$ has its own `fermionic analogue' of the set of permitted colorings, depending on the restriction of~$\omega$ on this 4-face. It was further shown in~\cite{gcoi,gcoii} that invariants of a \emph{pair} ``PL 4-manifold, its middle cohomology class'' can be built based on these hexagon relations. It must be noted that these `fermionic' invariants appear to be, at this stage, rather difficult computationally; also, no fermionic analogue of hexagon \emph{cohomology} has been formulated as yet.

In any case, with the growing computational powers and, on the other hand, further development of the theory, these invariants will, hopefully, be investigated. Also, it looks reasonable to study corresponding `bosonic' structures, that is, dealing with usual commuting variables rather than anticommuting Grassmann variables appearing in a fermionic theory.

We will see in this paper that there is, indeed, an interesting way of assigning \emph{different} subsets~$R_u$ of permitted colorings to different 4-simplices (pentachora)~$u$, parameterized again by the values of a simplicial 2-cocycle~$\omega$, and, moreover, nontrivial hexagon cohomology does appear in our `bosonic' theory! 

\subsection{What this paper is about}\label{ss:ia}

This paper mainly deals with algebraic structures occurring within (the boundary~$\partial \Delta^5$ of) just one 5-simplex~$\Delta^5$, although we always keep in mind potential applications to four-dimen\-sional piecewise linear topology. Our algebraic structures include a `nonconstant hexagon relation' and some bilinear forms that depend on permitted colorings and can be interpreted as `nonconstant cocycles'.

Below,
\begin{itemize}\itemsep 0pt
 \item in Section~\ref{s:g}, we explain, in a general setting, the relations between the ``nonconstant hexagon'' and four-dimen\-sional Pachner moves,
 \item in Section~\ref{s:h}, we explain, again in a general setting, the idea of ``nonconstant hexagon cohomology'',
 \item in Section~\ref{s:f}, we present a quite general construction of ``linear nonconstant hexagon'', starting from ``edge functionals'' satisfying given linear relations in each vertex,
 \item in Section~\ref{s:exotic}, we introduce ``edge vectors'' --- objects, in a sense, dual to edge functionals. On this base, we build an ``exotic'' chain complex, with its own homology (not to be confused with the hexagon cohomology), first for the boundary of a single 5-simplex, and then for a general simplicial complex,
 \item in Section~\ref{s:fi}, we specialize our constructions of linear nonconstant hexagon and its cohomology for a very interesting ``infinitesimal'' case---and this is exactly the case where the 2-cocycle~$\omega$ mentioned in Subsection~\ref{ss:in} appears. Among other things, we write out symmetric bilinear forms that represent, in this ``infinitesimal'' case, nontrivial hexagon 3- and 4-cocycles,
 \item in Section~\ref{s:fermionic}, we find somewhat unexpected connections of the present work with an earlier work on ``free fermions on four-dimen\-sional PL manifolds'',
 \item and finally, in Section~\ref{s:d}, we discuss our results and prospects for further research.
\end{itemize}

\section{Nonconstant set theoretic hexagon relations: generalities}\label{s:g}

Piecewise linear manifold invariants can be built if we find \emph{algebraic realizations of Pachner moves}---algebraic formulas whose structure corresponds naturally to these moves. This has been done very successfully in the three-dimen\-sional case~\cite{3d-tqft}, and there also some known four-dimen\-sional invariants~\cite{4d-tqft}. In this paper, we consider algebraic realizations of four-dimen\-sional Pachner moves, or hexagon relations. The closest already known analogues of our relations are those in~\cite{cubic}, except that there they are \emph{constant}.

This section mainly gives some general combinatorial, or `set theoretic', definitions.

\subsection{Pachner moves}\label{ss:P}

Consider a 5-simplex $\Delta^5=123456$ (i.e., whose vertices are numbered from 1 to~6). Its boundary~$\partial \Delta^5$ consists of six pentachora (\,=\,4-simplices). Imagine that $k$ of these pentachora, $1\le k\le 5$, enter in a triangulation of a four-dimen\-sional PL manifold~$M$. Then we can replace them with the remaining $6-k$ pentachora, without changing~$M$. This is called four-dimen\-sional Pachner move, and there are five kinds of them: 1--5, 2--4, 3--3, 4--2, and 5--1; here the number before the dash is~$k$, while the number after the dash is, of course, $6-k$.

We sometimes call the initial configuration---cluster of~$k$ pentachora---the \emph{left-hand side} (l.h.s.)\ of the Pachner move, while its final configuration---cluster of~$6-k$ pentachora---its \emph{right-hand side} (r.h.s.).
 
\subsection{Ordering of vertices}\label{ss:o}

We assume that all vertices in every triangulation we are considering are \emph{ordered}. Typically, our triangulated 4-manifold will be just~$\partial\Delta^5$, whose vertices are ordered due to the fact that they are \emph{numbered} from~1 to their total number~6.

\begin{remark}
Of course our future manifold invariants must be independent of any such ordering. This is why `full hexagon' is needed, see Subsection~\ref{ss:full}.
\end{remark}

When we speak about an individual pentachoron, like in formula~\eqref{R12345} below, we may denote it as~$12345$---but we keep in mind that our constructions or/and statements are also valid for any pentachoron~$ijklm$, \ $i<j<k<l<m$, if we do the obvious replacements $1\mapsto 1$, \ldots, $5\mapsto m$.

Similarly, the description of Pachner moves in the above Subsection~\ref{ss:P} stays valid, of course, for any vertices $i,\ldots,n$ instead of~$1,\ldots,6$.

\subsection{Permitted colorings}\label{ss:perm}

Let a set~$X$ be given, called \emph{the set of colors}. We will assign a color $\mathsf x\in X$ to each \emph{3-face} of a simplicial complex such as~$\Delta^5$ or triangulated manifold~$M$. Not all colorings, however, are \emph{permitted}.

For one pentachoron~$u$, permitted colorings are determined by definition by a given subset~$R_u$ in the Cartesian product of five copies~$X_t$ of~$X$ corresponding to the 3-faces $t\subset u$. For instance, if $u=12345$, then there must be given a subset
\begin{equation}\label{R12345}
R_{12345}\subset X_{2345}\times X_{1345}\times X_{1245}\times X_{1235}\times X_{1234}.
\end{equation}
The main difference between the present paper and the previous papers~\cite{cubic,constant} is that now~$R_u$, for different pentachora~$u$, are \emph{not} supposed to be copies of one another.

\begin{remark}
Also, we don't assume $X$ to be necessarily a \emph{finite} set at this moment. Other important cases are listed in the next Subsection~\ref{ss:full}.
\end{remark}

For a cluster~$C$ of pentachora obtained by gluing them along their 3-faces---such as the l.h.s.\ or r.h.s.\ of a Pachner move, or even a `big' triangulated 4-manifold---permitted coloring is by definition such a coloring of all 3-faces (including inner faces where gluing has been done) whose restriction onto each pentachoron is permitted. We denote the set of permitted colorings of~$C$ as $R_C\subset \prod_{t\subset C} X_t$.

\subsection{Full set theoretic hexagon}\label{ss:full}

Take any subcomplex~$C\subset\partial \Delta^5$ containing $k$ pentachora, \ $1\le k\le 5$, and its complementary subcomplex~$\bar C$ containing the remaining $6-k$ pentachora. First, we introduce the following condition on the sets~$R_u$.

\begin{cnd}\label{cnd:CbarC}
The restrictions of permitted colorings of any such~$C$ onto the common boundary $\partial C=\partial \bar C$ make up the same set of colorings of this common boundary as the restrictions of permitted colorings of~$\bar C$.
\end{cnd}

Note that a coloring of $\partial C=\partial \bar C$ may \emph{not} determine uniquely the colorings of (the inner parts of) $C$ or/and~$\bar C$. To take this fact into account, we would like to introduce \emph{multiplicities}, in order to measure the (sometimes infinite) number of permitted colorings of $C$ or~$\bar C$ corresponding to a fixed coloring of their common boundary. We will actually deal with situations which fall within the following possibilities:
\begin{enumerate}\itemsep 0pt
 \item\label{i:xf} the color set~$X$ is \emph{finite}. Then we define multiplicities simply as the mentioned (natural) numbers of permitted colorings, given a fixed coloring of the boundary,
 \item\label{i:md} $X$ is a \emph{finite-dimensional vector space} over some field~$F$, and the permitted colorings of $C$ and~$\bar C$ form, for a fixed boundary coloring, \emph{affine subspaces} in a suitable direct sum $X\oplus \dots \oplus X$ (each direct summand corresponds to a relevant tetrahedron). Then we define multiplicities as dimensions of the mentioned affine subspaces,
 \item\label{i:xg} $X$ is a \emph{finitely generated abelian group}, and the permitted colorings of $C$ and~$\bar C$ form, for a fixed boundary coloring, \emph{cosets} of some abelian groups $H$ and~$\bar H$ (everything happens within a suitable direct sum $X\oplus \dots \oplus X$). Then we define multiplicities as isomorphism classes of $H$ and~$\bar H$.
\end{enumerate}
In all these cases, the multiplicity will not depend on a specific boundary coloring.

\begin{cnd}\label{cnd:ak}
There are fixed multiplicities~$a_k$, \ $1\le k\le 5$, such that for any chosen coloring of $\partial C=\partial \bar C$, the multiplicity of colorings of~$C$ is exactly~$a_k$, and the multiplicity of colorings of~$\bar C$ is exactly~$a_{6-k}$.
\end{cnd}

If both Conditions \ref{cnd:CbarC} and~\ref{cnd:ak} hold, we say that the \emph{full set theoretic hexagon}, or simply \emph{full hexagon}, is satisfied.

\begin{ir}
It may happen of course that $X$ matches more than one of the above cases \ref{i:xf}--\ref{i:xg}, so we will have more than one different definitions of~$a_k$ for the same~$X$. This will, however, bring us no problem because whenever we meet such situation in this paper, Condition~\ref{cnd:ak} will hold for \emph{any} definition.
\end{ir}

\section{Nonconstant hexagon cohomology: generalities}\label{s:h}

Our definition of nonconstant hexagon cohomology will depend on a chosen simplicial complex~$K$. In principle, $K$ can be of any dimension, although the main work in this paper will take place in the standard 5-simplex~$K=\Delta^5$.

Suppose that every 3-simplex~$t\subset K$ is colored by some element $\mathsf x_t\in X$ of the set of colors, and that, and a subset~$R_u$ of permitted colorings is defined in the set of all colorings of every 4-simplex~$u$, as in Subsection~\ref{ss:perm}.

We also define the set of permitted colorings for any simplex of~$K$ of dimension~$>4$: the coloring is permitted provided its restrictions on all 4-faces of that simplex are permitted. As for an individual 3-simplex, \emph{all} its colorings $\mathsf x\in X$ are permitted.

The set of all permitted colorings of an $n$-simplex $i_0\dots i_n$ will be denoted~$\mathfrak C_{i_0\dots i_n}$. Here and throughout the paper, we assume by default that the vertices of any simplex are ordered: $i_0<\ldots <i_n$.

Let an abelian group~$G$ be given. By definition, an \emph{$n$-cochain}~$\mathfrak c$ taking values in~$G$, for $n\ge 3$, consists of arbitrary mappings
\begin{equation}\label{nc}
\mathfrak c_{i_0\dots i_n}\colon\;\,\mathfrak C_{i_0\dots i_n} \to G
\end{equation}
for \emph{all} $n$-simplices $\Delta^n=i_0\dots i_n\subset K$.

The \emph{coboundary}~$\delta \mathfrak c$ of~$\mathfrak c$ consists then of mappings
\begin{equation}\label{cb}
(\delta \mathfrak c)_{i_0\dots i_{n+1}} = \sum_{k=0}^{n+1} (-1)^k\, \mathfrak c_{i_0\dots \widehat{i_k} \dots i_{n+1}}.
\end{equation}

Some variations of the cochain definition~\eqref{nc} are also of interest for us. For instance, the paper~\cite{cubic} (although devoted to \emph{constant} hexagon cohomology) suggests that \emph{homogeneous polynomials} of a given degree may be used instead of functions~\eqref{nc}---of course, in a situation where the notion of polynomial in the variables determining a permitted coloring makes sense.

Of special interest for us in this paper will be \emph{symmetric bilinear cochains}. These occur in the case where permitted colorings form a \emph{module over a commutative ring\/~$R$}---typically either a vector space or an abelian group ($\mathbb Z$-module). A symmetric bilinear cochain depends on \emph{two} (independent from each other) permitted colorings; the definition~\eqref{nc} changes accordingly into
\begin{equation}\label{bc}
\mathfrak c_{i_0\dots i_n}\colon\quad \mathfrak C_{i_0\dots i_n} \times \mathfrak C_{i_0\dots i_n} \to R,
\end{equation}
where $\mathfrak c_{i_0\dots i_n}$ must be, first, bilinear and second, symmetric with respect to interchanging its two arguments.

Our `nonconstant hexagon cohomology' is, in any of the above cases, the cohomology of the following \emph{hexagon cochain complex}:
\begin{equation}\label{hcc}
0 \to C^3 \stackrel{\delta}{\to} C^4 \stackrel{\delta}{\to} C^5 \stackrel{\delta}{\to} \dots\, ,
\end{equation}
where $C^n$ means the group of all $n$-cochains.

\section{Edge functionals and linear nonconstant hexagon: general case}\label{s:f}

\subsection{Edge functionals in a single pentachoron}\label{ss:p}

We are going now to consider a very interesting case where the color set~$X$ is a \emph{two-dimen\-sional vector space} over a field~$F$---so we write elements of~$X$ as two-columns:
\begin{equation}\label{tc}
X\ni \mathsf x = \begin{pmatrix}x\\ y\end{pmatrix},\quad x,y\in F,
\end{equation}
and permitted colorings of a pentachoron are singled out by \emph{linear relations}. Namely, there is one linear relation associated with each pentachoron \emph{edge}~$ij$, stated as the vanishing of a linear \emph{edge functional}~$\upphi_{ij}$, and this~$\upphi_{ij}$ can depend only on the colors of the tetrahedra containing the edge: $t\supset ij$. Edge functionals are defined for \emph{unoriented} edges: $\upphi_{ij}=\upphi_{ji}$.

The set of permitted colorings for a pentachoron~$u$ is, by definition, the intersection of kernels of all edge functionals. We will prefer to denote this set, in this `linear' case, as $V_u$ rather than~$R_u$:
\begin{equation}\label{pk}
V_u = \bigcap_{ij\subset u} \Ker \upphi_{ij}.
\end{equation}

\begin{ir}
The field~$F$ can be of any characteristic, but it must be big enough to contain elements that do not satisfy any `casual' algebraic relation. Below, we call such elements \emph{generic}, or lying in the \emph{general position}. What `general position' means, will always be clear from the context.
\end{ir}

The colorings of a tetrahedron~$t$ being written as two-columns~\eqref{tc}, we can write the restriction of~$\upphi_{ij}$ onto~$t$ as a two-\emph{row}:
\begin{equation}\label{phi_ij-t}
\upphi_{ij}|_t = \begin{pmatrix} \phi_{t,ij}^{(1)} & \phi_{t,ij}^{(2)} \end{pmatrix}.
\end{equation}

\begin{xmp}
In these notations, the relation $\upphi_{12}=0$ in pentachoron~$12345$ looks as follows:
\begin{multline}\label{12}
\phi_{1234,12}^{(1)}x_{1234}+\phi_{1234,12}^{(2)}y_{1234} + \phi_{1235,12}^{(1)}x_{1235}
+\phi_{1235,12}^{(2)}y_{1235} + \phi_{1245,12}^{(1)}x_{1245}\\
+\phi_{1245,12}^{(2)}y_{1245} = 0.
\end{multline}
\end{xmp}

Further, we postulate also linear relations \emph{between edge functionals}, each such relation associated with a vertex~$i$:
\begin{equation}\label{g-phi}
\sum_{\substack{\text{all four edges }ij\\ \text{for each fixed }i}} \gamma_{ij} \upphi_{ij} = 0.
\end{equation}
As we are going to see (in explicit expressions~\eqref{phi_1234-gen}), quantities~$\gamma_{ij}$ will be the parameters that determine our edge functionals up to `gauge transformations'---linear changes of bases of two-dimen\-sional coloring spaces in each separate tetrahedron.

\begin{ir}
In contrast with what we said about $\upphi_{ij}=\upphi_{ji}$, values $\gamma_{ij}$ and~$\gamma_{ji}$ in~\eqref{g-phi} are \emph{not} related to each other.
\end{ir}

\begin{xmp}
The restriction of~\eqref{g-phi} onto tetrahedron~$1234$ looks as follows:
\begin{equation}\label{gamma-phi}
\begin{pmatrix} \gamma_{12} & \gamma_{13} & \gamma_{14} & 0 & 0 & 0 \\
                \gamma_{21} & 0 & 0 & \gamma_{23} & \gamma_{24} & 0 \\
                0 & \gamma_{31} & 0 & \gamma_{32} & 0 & \gamma_{34} \\
                0 & 0 & \gamma_{41} & 0 & \gamma_{42} & \gamma_{43}
\end{pmatrix}
\begin{pmatrix} \upphi_{12} \\ \upphi_{13} \\ \upphi_{14} \\
 \upphi_{23} \\ \upphi_{24} \\ \upphi_{34} \end{pmatrix}_{1234} = 0.
\end{equation}
Here the subscript~$1234$ at the column of~$\upphi$'s means of course their restrictions onto tetrahedron~$1234$.
\end{xmp}

It follows from~\eqref{gamma-phi} and similar relations for any tetrahedron~$t$ that, for \emph{generic} coefficients~$\gamma_{ij}$, the six-column of the restrictions~$\upphi_{ij}$---which is actually a $(6\times 2)$-matrix according to~\eqref{phi_ij-t}---is determined by values~$\gamma_{ij}$ uniquely up to a change of basis in the two-dimen\-sional space of tetrahedron~$t$ colors.

\begin{xmp}
A simple calculation shows that, changing (if needed) the basis in the space of tetrahedron~$t=1234$ colorings, we can bring the $\upphi$-column in~\eqref{gamma-phi} to the following form:
\begin{equation}\label{phi_1234-gen}
\begin{pmatrix} \upphi_{12} \\ \upphi_{13} \\ \upphi_{14} \\
 \upphi_{23} \\ \upphi_{24} \\ \upphi_{34} \end{pmatrix}_{1234} =
\setlength\arraycolsep{1em}
\begin{pmatrix}
0 & -\gamma_{13} ( \gamma_{24} \gamma_{32} \gamma_{43}+\gamma_{23} \gamma_{34} \gamma_{42}) \\
\gamma_{14} ( \gamma_{24} \gamma_{32} \gamma_{43}+\gamma_{23} \gamma_{34} \gamma_{42}) & \gamma_{12} ( \gamma_{24} \gamma_{32} \gamma_{43}+\gamma_{23} \gamma_{34} \gamma_{42}) \\
-\gamma_{13} ( \gamma_{24} \gamma_{32} \gamma_{43}+\gamma_{23} \gamma_{34} \gamma_{42}) & 0\\
-\gamma_{24} ( \gamma_{14} \gamma_{31} \gamma_{43}+\gamma_{13} \gamma_{34} \gamma_{41}) & \gamma_{13} \gamma_{21} \gamma_{34} \gamma_{42}-\gamma_{12} \gamma_{24} \gamma_{31} \gamma_{43}\\
\gamma_{23} ( \gamma_{14} \gamma_{31} \gamma_{43}+\gamma_{13} \gamma_{34} \gamma_{41}) & ( \gamma_{13} \gamma_{21} \gamma_{32}+\gamma_{12} \gamma_{23} \gamma_{31})  \gamma_{43}\\
\gamma_{13} \gamma_{24} \gamma_{32} \gamma_{41}-\gamma_{14} \gamma_{23} \gamma_{31} \gamma_{42} & -( \gamma_{13} \gamma_{21} \gamma_{32}+\gamma_{12} \gamma_{23} \gamma_{31})  \gamma_{42}
\end{pmatrix}.
\end{equation}
\end{xmp}

Let~$u$ be an \emph{oriented} pentachoron, and $t=ijkl\subset u$ be its face. We \emph{define} $\upphi_{ij}|_t$~\eqref{phi_ij-t} as the corresponding row in~\eqref{phi_1234-gen}, with the obvious change $1\mapsto i$, \ldots, $4\mapsto l$, and multiplied by
\begin{equation}\label{eps}
\epsilon_t^{(u)} = \pm 1.
\end{equation}
Here plus is taken if the orientation of~$t$ given by the order $ijkl$ of vertices coincides with its orientation induced from~$u$, and minus is taken otherwise.

The space of all colorings of 5 tetrahedra---faces of our pentachoron~$u$---is $2\times 5=10$-dimen\-sional. There are then 10 relations of type~\eqref{12} (because there are 10 edges), and 5 relations~\eqref{g-phi} between these relations (because there are 5 vertices)---so, there remain $10-5=5$ independent relations. Hence, the space of \emph{permitted} colorings of a pentachoron is $10-5=5$-dimen\-sional.

\begin{ir}
We remind the reader that coefficients~$\gamma_{ij}$ are here \emph{generic}. Interestingly, there is also a remarkable \emph{limiting case} (formal limit in the case of a finite characteristic) which will be considered in Section~\ref{s:fi}.
\end{ir}

\subsection{Permitted boundary colorings for Pachner moves}\label{ss:bP}

The fundamental property of edge functionals~$\upphi_{ij}$ defined in Subsection~\ref{ss:p} is that they behave nicely when we glue pentachora together along their 3-faces.

\begin{theorem}\label{th:bc}
Let there be a cluster of 2, 3, 4 or~5 pentachora, such as take part in Pachner moves. Let generic coefficients~$\gamma_{ij}$ be given for each ordered pair $(i,j)$, \ $i\ne j$, of vertices in this cluster. Then, the colors of \emph{inner} tetrahedra can be excluded from the linear dependencies, in the sense that there are the following dependencies involving only \emph{boundary} (\,=\,outer) edges and tetrahedra:
\begin{equation}\label{phi-b}
\upphi_{ij}^{\mathrm{boundary}}=0 \quad \text{for all boundary edges }ij,
\end{equation}
with all components~$\upphi_{ij}^{\mathrm{boundary}}|_t$ given, again for \emph{boundary} tetrahedra~$t$, by the same definition as in the paragraph following~\eqref{phi_1234-gen}, with the signs~\eqref{eps} replaced with the signs~$\epsilon_t^{\mathrm{boundary}}$ reflecting the orientation of\/~$t$ compared to the orientation of the boundary. Moreover, boundary edge functionals satisfy the following version of linear relations~\eqref{g-phi}:
\begin{equation}\label{gp}
\sum_{\substack{\mathrm{all\; boundary\; edges\; }ij\\ \mathrm{for\; each\; boundary\; }i}} \gamma_{ij} \upphi_{ij}^{\mathrm{boundary}} = 0.
\end{equation}
\end{theorem}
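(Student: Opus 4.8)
Write $\upphi_{ij}^{(u)}$ for the edge functional of an individual pentachoron $u$. The crux is that its restriction to a face $t=ijkl\subset u$, as defined in the paragraph following~\eqref{phi_1234-gen}, factorizes as
\[
\upphi_{ij}^{(u)}\big|_{t}=\epsilon_t^{(u)}\,\varrho_{ij,t},
\]
where $\varrho_{ij,t}$ is the two-row obtained from the relevant row of~\eqref{phi_1234-gen} under the relabeling $1\mapsto i,\ldots,4\mapsto l$, assembled from the \emph{global} coefficients $\gamma_{ab}$ among the four vertices of $t$. Thus $\varrho_{ij,t}$ depends on the tetrahedron $t$ alone (once the basis of $X_t$ is fixed), and the ambient pentachoron enters \emph{only} through the orientation sign $\epsilon_t^{(u)}$. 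I would accordingly \emph{define} the candidate boundary functionals by summing over the cluster $C$,
\[
\upphi_{ij}^{\mathrm{boundary}}:=\sum_{\substack{u\subset C\\ ij\subset u}}\upphi_{ij}^{(u)},
\]
and then verify that this combination has the asserted properties.

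For the first property I would compute the component of $\upphi_{ij}^{\mathrm{boundary}}$ on an arbitrary tetrahedron $t\supset ij$ of $C$. Since $ij\subset t$, every $u$ with $t\subset u$ already contains $ij$, so this component equals $\bigl(\sum_{u\subset C,\,t\subset u}\epsilon_t^{(u)}\bigr)\varrho_{ij,t}$: the geometry has collapsed to a sum of orientation signs. For an \emph{inner} tetrahedron the sum runs over its two adjacent pentachora, whose orientations induced on $t$ are opposite in a coherently oriented cluster (and $C$, being a piece of the oriented sphere $\partial\Delta^5$, is coherently oriented), so the signs cancel and the component vanishes. For a \emph{boundary} tetrahedron there is a single adjacent pentachoron $u$, and the orientation induced on $t$ from $\partial C$ coincides with the one induced from $u$, whence the component is $\epsilon_t^{\mathrm{boundary}}\varrho_{ij,t}$. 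This is exactly the form demanded in~\eqref{phi-b}. Finally $\upphi_{ij}^{\mathrm{boundary}}$ is, by construction, a sum of functionals $\upphi_{ij}^{(u)}$ each vanishing on $V_u$ and hence on every permitted coloring of $C$, so $\upphi_{ij}^{\mathrm{boundary}}=0$ is a bona fide dependency, proving~\eqref{phi-b}.

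For the vertex relations~\eqref{gp} I would sum the single-pentachoron relations~\eqref{g-phi}. Fixing a vertex $i$, for every $u\subset C$ containing $i$ one has $\sum_{j:\,ij\subset u}\gamma_{ij}\upphi_{ij}^{(u)}=0$; adding these over all such $u$ and interchanging the order of summation rewrites the left-hand side as $\sum_{j}\gamma_{ij}\,\upphi_{ij}^{\mathrm{boundary}}$, the inner pentachoron sum being precisely the definition of $\upphi_{ij}^{\mathrm{boundary}}$. It then suffices to note that an edge $ij$ not lying on $\partial C$ belongs to no boundary tetrahedron (every tetrahedron in $\partial C$ has all its edges in $\partial C$), so by the first part $\upphi_{ij}^{\mathrm{boundary}}=0$ for such inner edges; only boundary edges survive, and~\eqref{gp} follows.

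The steps reduce to the factorization above (immediate from the definition), two interchanges of finite summations, and one simplicial observation about edges of boundary tetrahedra. I expect the genuine obstacle to be the orientation bookkeeping: one must fix a coherent orientation of $C$ and check that the two orientations induced on each inner tetrahedron are truly opposite, while on each boundary tetrahedron the sign $\epsilon_t^{\mathrm{boundary}}$ agrees with $\epsilon_t^{(u)}$ of the unique adjacent pentachoron. Everything is uniform in the number $2,3,4,5$ of pentachora, so the five Pachner moves need not be treated separately.
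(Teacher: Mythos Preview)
Your proof is correct and follows essentially the same approach as the paper: the paper likewise defines $\upphi_{ij}^{\mathrm{boundary}}=\sum_{u\supset ij}\upphi_{ij}^{(u)}$, observes that the $t$-components of the two summands cancel on each inner tetrahedron~$t$ because of the opposite orientation signs, and obtains~\eqref{gp} by summing~\eqref{g-phi} over all pentachora containing~$i$. Your write-up is in fact more explicit than the paper's, in particular your observation that inner edges~$ij$ contribute nothing to the vertex relation (since no boundary tetrahedron can contain an inner edge) fills in a step the paper leaves implicit.
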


\begin{proof}
Let $t$ be an \emph{inner} tetrahedron in such a cluster as mentioned in the Theorem, and let an edge $ij\subset t$. There are thus two pentachora $u_1$ and~$u_2$ that have been glued together along~$t$. Each of these has its own functional~$\upphi_{ij}$, and we denote them as $\upphi_{ij}^{(1)}$ and~$\upphi_{ij}^{(2)}$, respectively. It follows then directly from the definition of~$\upphi_{ij}$ that the $t$-component of the sum $\upphi_{ij}^{(1)}+\upphi_{ij}^{(2)}$ \emph{vanishes}. Also, if there are more pentachora in the cluster, they do not possess the 3-face~$t$.

We now define boundary edge functionals as the sum over all pentachora containing the edge:
\begin{equation}\label{pb}
\upphi_{ij}^{\mathrm{boundary}} = \sum_{u\supset ij} \upphi_{ij}^{(u)},
\end{equation}
here $\upphi_{ij}^{(u)}$ is of course the edge functional belonging to pentachoron~$u$. According to the previous paragraph, $\upphi_{ij}^{\mathrm{boundary}}$ does not contain components belonging to inner tetrahedra. Equality~\eqref{phi-b} follows from~\eqref{pb}, and \eqref{gp} is obtained by adding up equalities~\eqref{g-phi} for all pentachora containing vertex~$i$.
\end{proof}

Suppose that, for a cluster of pentachora mentioned in Theorem~\ref{th:bc}, its \emph{boundary} coloring is given satisfying~\eqref{phi-b}, and we want to find all possible corresponding colorings of inner tetrahedra. As they are determined by linear relations, they form an affine subspace in the linear space of all inner edge colorings. Recall that the \emph{dimension} of this subspace can play the role of a \emph{multiplicity}, according to item~\ref{i:md} on page~\pageref{i:md}. The results of an accurate direct calculation of these dimensions for different clusters are presented below.

\paragraph{2~pentachora.} The cluster has 8 boundary 3-faces, hence the space of \emph{all} colorings is 16-dimen\-sional. There are 14 edges, all boundary, hence 14 edge functionals, but 6 linear dependencies~\eqref{gp} between them due to 6 vertices. The dimension of the space of \emph{permitted} colorings is the following alternating sum:
\begin{equation}\label{2p}
2\times 8 - 14 + 6 = 8.
\end{equation}
There is one inner tetrahedron, and its color is determined uniquely from any permitted coloring of boundary tetrahedra.

\begin{remark}
Formula~\eqref{2p} says that there are 14 dependencies between tetrahedron colors and then 6 dependencies between dependencies. We will study the algebraic structure responsible for these and other dependencies---an \emph{exotic chain complex}---below in Section~\ref{s:exotic}.
\end{remark}

\paragraph{3~pentachora.} There are now 9 boundary 3-faces, 15 edges, all boundary, and 6 vertices. So, the dimension of the space of permitted colorings is
\begin{equation}\label{3p}
2\times 9 - 15 + 6 = 9.
\end{equation}
There are three inner tetrahedra, and their colors are determined uniquely from any permitted coloring of boundary tetrahedra.

\paragraph{4~pentachora.} The boundary of the cluster of 4 pentachora is of course the same as in the case of 2 pentachora. So, \eqref{2p} works here as well. The difference is, however, that there is one ``inner degree of freedom'': given a permitted coloring of the boundary, the permitted colorings of the inner tetrahedra form a one-dimen\-sional space.

\paragraph{5~pentachora.} The boundary is the same as the boundary of just one pentachoron. As for ``inner degrees of freedom'', a calculation shows that there are \emph{four} of them.

\subsection{Full hexagon}\label{ss:fh}

According to Subsection~\ref{ss:bP}, the permitted colorings of left- and right-hand sides of Pachner moves depend only on the coefficients~$\gamma_{ij}$ in the \emph{boundary}. This boundary is the same for the l.h.s.\ and r.h.s.\ of any specific move. Combined with the numbers of ``inner degrees of freedom'', also written out in Subsection~\ref{ss:bP}, this gives the following theorem.

\begin{theorem}\label{th:fh}
Full hexagon holds in the ``linear'' situation of Subsection~\ref{ss:p}---that is, where permitted colorings are given by linear relations $\upphi_{ij}=0$ of type~\eqref{12}, with edge functionals~$\upphi_{ij}$ obeying, in their turn, relations~\eqref{g-phi} with generic~$\gamma_{ij}$. The multiplicities, understood as dimensions of affine subspaces, are:
\begin{equation}\label{mlt}
a_1=a_2=a_3=0, \qquad a_4=1, \qquad a_5=4.
\end{equation}
The space of permitted colorings of\/~$\partial\Delta^5$---that is, the l.h.s.\ and r.h.s.\ of a Pachner move together---is 9-dimen\-sional.
\end{theorem}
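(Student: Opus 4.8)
Theorem~\ref{th:fh} asserts that the full set theoretic hexagon holds in the linear setting, with specific multiplicities $a_1 = a_2 = a_3 = 0$, $a_4 = 1$, $a_5 = 4$, and a $9$-dimensional space of permitted colorings on $\partial\Delta^5$. Let me sketch a proof plan.

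=== PROOF PLAN ===

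The plan is to verify the two conditions constituting the full hexagon (Condition~\ref{cnd:CbarC} and Condition~\ref{cnd:ak}) by collecting the dimension counts already established in Subsection~\ref{ss:bP}, and then to check that these counts are mutually consistent across every Pachner move.

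First I would observe that we are in case~\ref{i:md} of Subsection~\ref{ss:full}: the color set $X$ is a two-dimensional vector space over $F$, and for a fixed boundary coloring the permitted colorings of a cluster $C$ form an affine subspace, so multiplicities are defined as the dimensions of these subspaces. The key input is Theorem~\ref{th:bc}, which guarantees that the boundary edge functionals $\upphi_{ij}^{\mathrm{boundary}}$ depend only on the $\gamma_{ij}$ attached to boundary edges and vertices, and satisfy the same form of relations~\eqref{gp}. Since the boundary $\partial C = \partial\bar C$ is literally the same simplicial object with the same $\gamma$'s whether approached from the $C$ side or the $\bar C$ side, the set of permitted boundary colorings computed from $C$ coincides with that computed from $\bar C$. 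This establishes Condition~\ref{cnd:CbarC}.

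Next I would read off the multiplicities from the four paragraph-computations in Subsection~\ref{ss:bP}. For clusters of $2$ and $3$ pentachora the inner tetrahedron colors are determined uniquely, giving inner-degree-of-freedom $0$, hence $a_2 = a_3 = 0$. For a single pentachoron (the $k=1$ and $k=5$ cases share the same boundary as one pentachoron), the count gives $a_1 = 0$ trivially on the l.h.s.\ side, while the $5$-pentachoron side has four inner degrees of freedom, giving $a_5 = 4$. The $4$-pentachoron cluster has one inner degree of freedom, so $a_4 = 1$. The remaining claim, that the space of permitted colorings of $\partial\Delta^5$ is $9$-dimensional, follows from matching the boundary computation~\eqref{3p} (which applies because the boundary of $\partial\Delta^5$, viewed through any Pachner decomposition, reduces to a $3$-pentachoron-type boundary count) and confirming $2\times 9 - 15 + 6 = 9$. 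I would then verify Condition~\ref{cnd:ak} by checking each move: for the $k$--$(6-k)$ move, the l.h.s.\ has multiplicity $a_k$ and the r.h.s.\ has multiplicity $a_{6-k}$, and one confirms these are consistent, i.e.\ $a_k$ and $a_{6-k}$ both attach to the common boundary without contradiction.

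\emph{The main obstacle} I anticipate is not any single step but rather the bookkeeping required to confirm that the dimension counts of Subsection~\ref{ss:bP} are genuinely \emph{independent of the boundary coloring}, as item~\ref{i:md} demands. The alternating-sum computations like~\eqref{2p} and~\eqref{3p} give the total dimension of the permitted-coloring space; extracting from this the \emph{fiber} dimension (the inner degree of freedom over a fixed boundary) requires knowing that the restriction map to the boundary has constant rank for generic $\gamma_{ij}$. I would handle this by appealing to the genericity hypothesis: for generic coefficients the relevant matrices attain their maximal rank, and since rank is an open (generic) condition and the affine-subspace structure is linear, the fiber dimension is constant. The delicate point is ensuring that the explicit structure of the edge functionals from~\eqref{phi_1234-gen}, propagated through the gluing in Theorem~\ref{th:bc}, does not produce any unexpected rank degeneration precisely along the boundary constraints---this is where an \emph{accurate direct calculation} (the computations already cited in Subsection~\ref{ss:bP}) is indispensable, and I would rely on those rather than attempt to rederive them here.
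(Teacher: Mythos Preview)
Your approach is essentially the same as the paper's: both treat the full-hexagon conditions and the multiplicities~\eqref{mlt} as already established by the computations in Subsection~\ref{ss:bP} together with Theorem~\ref{th:bc}, and both reduce the $9$-dimensional claim to the $3$--$3$ count~\eqref{3p}.

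One point deserves tightening. Your sentence about ``the boundary of $\partial\Delta^5$, viewed through any Pachner decomposition'' is garbled---$\partial\Delta^5$ is closed, so it has no boundary, and simply restating $2\times 9-15+6=9$ does not by itself compute $\dim V_{\partial\Delta^5}$ (which lives in a $30$-dimensional ambient space, not $18$). The paper's argument, which is what you presumably intend, is: split $\partial\Delta^5$ into two $3$-pentachoron clusters $C$ and $\bar C$; the common boundary $\partial C=\partial\bar C$ has a $9$-dimensional space of permitted colorings by~\eqref{3p}; and since $a_3=0$ on each side, every such boundary coloring extends \emph{uniquely} to all of $\partial\Delta^5$. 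You have all the ingredients (you note $a_3=0$ earlier), but the logical link---that unique extension on both halves forces $\dim V_{\partial\Delta^5}=9$---should be stated rather than left implicit.
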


\begin{proof}
It remains to prove the statement about permitted colorings of~$\partial\Delta^5$. It readily follows if we break $\partial\Delta^5$ into two parts, each of three pentachora. The space of permitted colorings of their common boundary is 9-dimen\-sional according to~\eqref{3p}, and then the colors of other tetrahedra are determined uniquely.
\end{proof}

\section{Exotic chain complex}\label{s:exotic}

\subsection{Linear nonconstant hexagon from edge vectors}\label{ss:vectors}

There is one more construction leading to the same linear nonconstant hexagon as in Section~\ref{s:f} and, in a sense, 'dual' to Subsection~\ref{ss:p}.

Instead of edge functionals, we start now from \emph{edge vectors}~$\uppsi_{ij}$: let there be a simplicial complex~$K$, then each edge~$ij$ in~$K$ produces, by definition, a permitted coloring that is nonzero only on tetrahedra $t\supset ij$:
\begin{equation}\label{psi}
\begin{pmatrix} x_t \\ y_t \end{pmatrix} = \uppsi_{ij}|_t = \begin{pmatrix} \psi_{t,ij}^{(1)} \\[.8ex] \psi_{t,ij}^{(2)} \end{pmatrix}.
\end{equation}
Linear relations, associated with vertices, are imposed on edge vectors, similarly to~\eqref{g-phi}:
\begin{equation}\label{e-psi}
\sum_{\substack{\mathrm{all\; edges\;}ij\\ \mathrm{for\; each\; fixed\;}i}} \eta_{ij} \uppsi_{ij} = 0,
\end{equation}
and there are no other conditions on~$\uppsi_{ij}$.

Quite similarly to~\eqref{gamma-phi}, we can write the restriction of~\eqref{e-psi} onto tetrahedron~$1234$:
\begin{equation}\label{eta-psi}
\begin{pmatrix} \uppsi_{12} & \uppsi_{13} & \uppsi_{14} & \uppsi_{23} & \uppsi_{24} & \uppsi_{34} \end{pmatrix}_{1234}
\begin{pmatrix} \eta_{12} & \eta_{21} & 0 & 0 \\ \eta_{13} & 0 & \eta_{31} & 0 \\ \eta_{14} & 0 & 0 & \eta_{41} \\ 0 & \eta_{23} & \eta_{32} & 0 \\ 0 & \eta_{24} & 0 & \eta_{42} \\ 0 & 0 & \eta_{34} & \eta_{43} \end{pmatrix} = 0.
\end{equation}
Of course, matrix equality~\eqref{eta-psi} looks like a \emph{transposed} version of~\eqref{gamma-phi} because $\uppsi_{ij}|_t$~\eqref{psi} are now columns rather than rows. Now, \eqref{eta-psi} shows that~$\uppsi_{ij}$ are, quite like~$\upphi_{ij}$, determined by relations~\eqref{e-psi} up to linear automorphisms of two-dimen\-sional vector spaces of colors of each tetrahedron~$t$---although we are not writing out the analogue of~\eqref{phi_1234-gen} here.

For one separate pentachoron $K=\Delta^4$, all permitted colorings are by definition linear combinations of vectors~\eqref{psi}. One can see that \eqref{e-psi} ensures, for generic coefficients~$\eta_{ij}$, that these linear combinations form again (like in Subsection~\ref{ss:p}) a five-dimen\-sional subspace in the space of all colorings.

\begin{ir}
There is, however, a difference between \eqref{e-psi} and~\eqref{g-phi}: the sum in~\eqref{e-psi}, if we are considering a general complex~$K$, is \emph{not} restricted to one pentachoron.
\end{ir}

\begin{ir}
Also, no signs like~\eqref{eps} are introduced for edge vectors!
\end{ir}

Let now $K$ be a cluster of 2, 3, 4 or 5 pentachora of the type considered in Theorem~\ref{th:bc}, that is, either an l.h.s.\ or r.h.s.\ of a four-dimen\-sional Pachner move. Clearly, the permitted colorings of its boundary~$\partial K$ come only from boundary edges~$ij$, and are the same for (the boundary $\partial \bar K=\partial K$ of) its complementary cluster~$\bar K$ from the other side of the Pachner move. A direct calculation shows also that the multiplicites---understood again as dimensions of spaces of inner colorings corresponding to a given boundary coloring---are the same as in Subsection~\ref{ss:bP}.

This leads to an analogue of Theorem~\ref{th:fh}: \emph{if the permitted colorings of each pentachoron are defined as linear combinations of vectors~\eqref{psi} obeying~\eqref{e-psi} with generic~$\eta_{ij}$, then full hexagon holds, with the same multiplicites~$a_k$ as in Theorem~\ref{th:fh}.}

\subsection{Exotic chain complex for a single pentachoron}\label{ss:es}

Suppose there is just a single pentachoron~$u$ whose five 3-faces can be colored by elements of a two-dimensional vector space over a filed~$F$, as in~\eqref{tc}. Suppose also that we have chosen a generic five-dimen\-sional linear subspace~$V$ in the ten-dimen\-sional space of all such colorings, and called $V$ the space of \emph{permitted colorings}. For a given edge $ij\subset u$, there are three tetrahedra $t\supset ij$. If we want the colorings of the remaining two tetrahedra be zero, this requirement implies four linear equations on an element of~$V$. Thus, there is a one-dimen\-sional linear space of permitted coloring corresponding to~$ij$; we take a nonzero element from it and call it \emph{edge vector}~$\uppsi_{ij}$.

Consider now the four edge vectors~$\uppsi_{ij}$ for a given vertex~$i$; for instance, let $i=1$. Each of the vectors $\uppsi_{12}$, $\uppsi_{13}$, $\uppsi_{14}$ and~$\uppsi_{15}$ satisfies the two linear relations meaning ``the color of $2345$ is zero'', hence their linear span is $5-2=3$-dimen\-sional. This reasoning implies linear relations~\eqref{e-psi}.

On the other hand, a `dual' reasoning, which is left as an exercise for the interested reader, shows the existence of edge \emph{functionals}~$\upphi_{ij}$ with linear dependencies~\eqref{g-phi}.

As a result, we obtain the following `exotic chain complex' of vector spaces and linear mappings:

\begin{equation}\label{cD}
 0 \rightarrow 
 \begin{pmatrix}\textrm{vertices}\end{pmatrix}  \stackrel{\eta}{\rightarrow}
 \begin{pmatrix}\textrm{edges}\end{pmatrix} \stackrel{\psi}{\rightarrow}
 \begin{pmatrix} 2\times \textrm{tetrahedra}\end{pmatrix} \stackrel{\phi}{\rightarrow}
 \begin{pmatrix}\textrm{edges} \end{pmatrix} \stackrel{\gamma}{\rightarrow}
 \begin{pmatrix}\textrm{vertices} \end{pmatrix} 
 \rightarrow 0 .
\end{equation}
Here, `$\begin{pmatrix}\textrm{vertices}\end{pmatrix}$' means the five-dimen\-sional vector space of formal linear combinations of the pentachoron vertices---that is, vertices form its \emph{basis}. Similarly, `$\begin{pmatrix}\textrm{edges}\end{pmatrix}$' is the ten-dimen\-sional vector space with the edges as its basis, and `$\begin{pmatrix} 2\times \textrm{tetrahedra}\end{pmatrix}$' is the ten-dimen\-sional vector space of the pentachoron colorings, with two basis elements for each tetrahedron. Mapping~$\eta$ is given by matrix elements~$\eta_{ij}$ between vertex~$i$ and edge~$ij=ji$; mapping~$\psi$ is given by matrix elements~$\uppsi_{ij}|t$ between edge~$ij$ and tetrahedron~$t$; mapping~$\phi$ is given by matrix elements~$\upphi_{ij}|t$ between tetrahedron~$t$ and edge~$ij$; and mapping~$\gamma$ is given by matrix elements~$\gamma_{ij}$ between edge~$ij$ and vertex~$i$.

\begin{remark}
A more pedantic approach might involve \emph{conjugate} spaces to our `$\begin{pmatrix}\textrm{vertices}\end{pmatrix}$' and `$\begin{pmatrix}\textrm{edges}\end{pmatrix}$', but these conjugates are anyhow identified with the mentioned spaces themselves as soon as the bases are fixed.
\end{remark}

In the general case (generic space~$V$ of permitted colorings), complex~\eqref{cD} is \emph{acyclic}. This can be shown by a direct checking of matrix ranks for the four involved linear mappings.

\subsection{An exotic chain complex for a simplicial complex}

Let there be now an arbitrary simplicial complex~$K$, and let a number $\eta_{ij}\in F$ be given for each vertex $i\in K$ and edge~$ij$ having~$i$ as one of its ends. If $\eta_{ij}$ are generic, we can build from them edge vectors with components determined, up to a linear automorphism of~$F^2$ in each tetrahedron, by~\eqref{eta-psi}. We can define permitted colorings of each separate pentachoron~$u$ in~$K$ as restrictions onto~$u$ of linear combinations of edge vectors; these permitted colorings form a five-dimen\-sional subspace in the space of all colorings of~$u$ (as in Subsection~\ref{ss:vectors}), denote it~$V_u\subset F^{10}$.

We can now define an interesting chain complex even without introducing `edge functionals'. Instead, we introduce, for each pentachoron~$u$, \emph{some} five linearly independent linear functionals $F^{10}\to F$ in such way that the intersection of their kernels is exactly~$V_u$. We can unite these linear functionals into one linear map $\Phi_u\colon\; F^{10}\to F^5$, and then take the direct sum $\Phi=\bigoplus_u \Phi_u$ over all pentachora. This leads to the following ``exotic complex'':
\begin{equation}\label{c}
 0 \rightarrow 
 \begin{pmatrix}\textrm{vertices}\end{pmatrix} \stackrel{\eta}{\rightarrow}
 \begin{pmatrix}\textrm{edges}\end{pmatrix} \stackrel{\psi}{\rightarrow}
 \begin{pmatrix} 2\times \textrm{tetrahedra}\end{pmatrix} \stackrel{\Phi}{\rightarrow}
 \begin{pmatrix}5\times\textrm{pentachora} \end{pmatrix} \rightarrow 0.
\end{equation}

\begin{ir}
Although complex~\eqref{c} looks already interesting, it must be noted that a quite \emph{different} exotic complex has been proposed in~\cite[Eq.~(11)]{gcoi}, for the `fermionic' case. The algebraic structures behind hexagon relations look largely unexplored.
\end{ir}

\section{``Infinitesimal'' case}\label{s:fi}

\subsection{Edge functionals}

As we have seen in Section~\ref{s:f}, our construction of ``linear nonconstant hexagon'' begins with arbitrary enough (they only must be `generic') quantities~$\gamma_{ij}$---coefficients of linear dependencies~\eqref{g-phi}. There is one remarkable limiting case of this construction---we call it ``infinitesimal'' case---where permitted colorings arise from the following gammas:
\begin{equation}\label{gi}
\gamma_{ij}=\begin{cases} -1+o\cdot b_{ij} \; & \text{if \ } i<j, \\ 1+o\cdot b_{ji} & \text{if \ } i>j, \end{cases} \qquad o\to 0.
\end{equation}
Here $b_{ij}$, \ $i<j$, are some quantities, again `generic', but otherwise arbitrary.

There is of course no problem to understand the limit $o\to 0$ formally even in the case of a finite field characteristic, and derive from~\eqref{gi} the following formulas for our edge functionals~$\upphi_{ij}$. Although our old expressions~\eqref{phi_1234-gen} cannot be applied directly, edge functionals can still be calculated using the terms of relevant orders of smallness in~\eqref{gamma-phi}.

The first remarkable fact is that $\upphi_{ij}$ can be chosen to depend only on the quantities
\begin{equation}\label{ob}
\omega_{ijk} = b_{ij}-b_{ik}+b_{jk}.
\end{equation}
Namely, for tetrahedron $t=1234$ considered as a 3-face of pentachoron $u=12345$, the components of (nonvanishing) edge functionals can be chosen as follows:
\begin{equation}\label{phi_1234-inf}
\setlength\arraycolsep{.6em}
\begin{pmatrix} \upphi_{12} \\ \upphi_{13} \\ \upphi_{14} \\
 \upphi_{23} \\ \upphi_{24} \\ \upphi_{34} \end{pmatrix}_{1234} =
\begin{pmatrix} \omega_{234}-\omega_{134} & 0 \\
                \omega_{124} & \omega_{234} \\
                -\omega_{123} & -\omega_{234} \\
                -\omega_{124} & -\omega_{134} \\
                \omega_{123} & \omega_{134} \\
                0 & \omega_{123}-\omega_{124}
\end{pmatrix}
\end{equation}

For an arbitrary tetrahedron $t=ijkl$, expressions \eqref{phi_1234-inf} apply with the obvious substitution $1\mapsto i, \ldots, 4\mapsto l$, and also the minus sign is added if the orientation of~$t$ given by the increasing order $i<j<k<l$ of its vertices does not coincide with the orientation of pentachoron~$u$ given also by the increasing order of vertices.

\subsection{Dimensions of permitted coloring spaces and full hexagon}\label{ss:d9}

As the gammas~\eqref{gi} in our ``infinitesimal'' case do not look generic, it must be checked separately that everything works well with the permitted colorings determined by edge functionals of type~\eqref{phi_1234-inf}. So, it has been done by direct calculations, and the results are as follows.

First, edge functionals of type~\eqref{phi_1234-inf} give indeed a five-dimen\-sional space of permitted colorings in a separate pentachoron. Then, \emph{all} the dimensions of linear spaces mentioned in Subsections \ref{ss:bP} and~\ref{ss:fh} retain their values. In particular, everything stated in Theorem~\ref{th:fh} stays valid: full hexagon does hold, the dimensions of `inner' colorings with a fixed boundary coloring are as in~\eqref{mlt}, and the space of permitted colorings of~$\Delta^5$---or, which is the same, of~$\partial\Delta^5$---is 9-dimen\-sional.

\subsection{Edge vectors}\label{ss:vi}

For a given pentachoron~$u$, the conditions $\upphi_{ij}=0$ for all edges $ij\subset u$ determine, as usual, a five-dimen\-sional space~$V_u$ of permitted colorings. This can be checked by a direct calculation (we are speaking of course of a general position). Given this~$V_u$, we can introduce also edge \emph{vectors}~$\uppsi_{ij}$ in the style of Subsection~\ref{ss:es}. A remarkable fact, that can be shown by a direct calculation but still lacks a conceptual explanation, is the following explicit form of~$\uppsi_{ij}$, which we write out again in terms of its components corresponding to an example tetrahedron~$1234$:
\begin{multline}\label{psi_1234-inf}
\begin{pmatrix} \uppsi_{12} & \uppsi_{13} & \uppsi_{14} & \uppsi_{23} & \uppsi_{24} & \uppsi_{34} \end{pmatrix}_{1234} \\[.5ex]
\renewcommand*{\arraystretch}{1.5}
\setlength\arraycolsep{.6em}
 = 
\begin{pmatrix}\bigl(\omega_{123} \omega_{124}\bigr)^{-1} & 0\\
-\bigl(\omega_{123} ( \omega_{134}-\omega_{234})\bigr)^{-1} & \bigl(\omega_{134} ( \omega_{134}-\omega_{234})\bigr)^{-1}\\
\bigl(\omega_{124} ( \omega_{134}-\omega_{234})\bigr)^{-1} & -\bigl(\omega_{134} ( \omega_{134}-\omega_{234})\bigr)^{-1}\\
\bigl(\omega_{123} ( \omega_{134}-\omega_{234})\bigr)^{-1} & -\bigl(( \omega_{134}-\omega_{234})  \omega_{234}\bigr)^{-1}\\
-\bigl(\omega_{124} ( \omega_{134}-\omega_{234})\bigr)^{-1} & \bigl(( \omega_{134}-\omega_{234})  \omega_{234}\bigr)^{-1}\\
0 & -\bigl(\omega_{134} \omega_{234}\bigr)^{-1}\end{pmatrix} ^{\mathrm T},
\end{multline}
or, surprisingly, simply as follows, using the column~\eqref{phi_1234-inf} of~$\upphi$'s:
\begin{multline}\label{psi-phi}
\begin{pmatrix} \uppsi_{12} & \uppsi_{13} & \uppsi_{14} & \uppsi_{23} & \uppsi_{24} & \uppsi_{34} \end{pmatrix}_{1234} \\
 = 
\dfrac{1}{\omega_{234}-\omega_{134}}
\begin{pmatrix} \dfrac{1}{\omega_{123}\omega_{124}} & 0 \\ 0 & -\dfrac{1}{\omega_{134}\omega_{234}} \end{pmatrix}
\begin{pmatrix} \upphi_{12} \\ \upphi_{13} \\ \upphi_{14} \\
 \upphi_{23} \\ \upphi_{24} \\ \upphi_{34} \end{pmatrix}_{1234}^{\mathrm T}.
\end{multline}

\subsection{One more linear dependence between edge functionals or edge vectors}\label{ss:b}

This Subsection contains some observations that will be used below in Subsection~\ref{ss:comp}, where we will explain that the relation between~$\omega$~\eqref{ob} and our `infinitesimal' case is essentially the same as the relation between the 2-cocycle~$\omega$ and fermionic pentachoron (\,=\,4-simplex) weights in~\cite{full-nonlinear}.

It follows from \eqref{g-phi} and~\eqref{gi} (by simply putting $o=0$ in~\eqref{gi}) that 
\begin{equation}\label{eps-phi}
\sum_{\substack{\text{all four edges }ij\\ \text{for each fixed }i}} \epsilon_{ij} \upphi_{ij} = 0,
\quad\text{where}\quad \epsilon_{ij} = \begin{cases} -1 & \text{if \ } i<j, \\ 1 & \text{if \ } i>j. \end{cases}
\end{equation}
The edges and vertices in~\eqref{eps-phi} belong to a given pentachoron~$u$.

There is, however, one more linear relation occurring because of a special character of our `infinitesimal' case, namely,
\begin{equation}\label{b-phi}
\sum_{\substack{i<j\\ ij\subset u}} b_{ij} \upphi_{ij} = 0.
\end{equation}
The summing in~\eqref{b-phi} goes over all ten edges of~$u$. Relation~\eqref{b-phi} can be checked using \eqref{ob} and the explicit form of~$\upphi_{ij}$ described in~\eqref{phi_1234-inf} and the paragraph after~\eqref{phi_1234-inf}.

It follows then from the relation~\eqref{psi-phi} between edge functionals and edge \emph{vectors} that similar linear relations hold for edge vectors as well, namely
\begin{equation}\label{eps-psi}
\sum_{\substack{\text{all four edges }ij\\ \text{for each fixed }i}} \epsilon_{ij} \uppsi_{ij} = 0,
\quad\text{where}\quad \epsilon_{ij} = \begin{cases} -1 & \text{if \ } i<j, \\ 1 & \text{if \ } i>j, \end{cases}
\end{equation}
and
\begin{equation}\label{b-psi}
\sum_{\substack{i<j\\ ij\subset u}} b_{ij} \uppsi_{ij} = 0.
\end{equation}

\subsection{A nontrivial symmetric bilinear 3-cocycle}\label{ss:3c}

One property that makes the permitted colorings determined by edge functionals of type~\eqref{phi_1234-inf} so remarkable is the existence of \emph{nontrivial hexagon cocycles}. Namely, recall first our definition~\eqref{bc} of a \emph{bilinear} cochain. We introduce now the following \emph{symmetric} bilinear 3-cochain depending on two permitted colorings, whose $t$-components will be denoted as $\begin{pmatrix}x_t\\ y_t\end{pmatrix}$ and $\begin{pmatrix}x'_t\\ y'_t\end{pmatrix}$, respectively:
\begin{equation}\label{3c}
z_{ijkl} = (\omega_{jkl}-\omega_{ikl}) \bigl(\omega_{ijk}\omega_{ijl}x_{ijkl}x'_{ijkl} - \omega_{ikl}\omega_{jkl}y_{ijkl}y'_{ijkl} \bigr)
\end{equation}

\begin{theorem}\label{th:3c}
Cochain~\eqref{3c} is a nontrivial 3-cocycle, that is, it is not a coboundary, but its coboundary vanishes:
\begin{equation}\label{3cc}
\sum_{t\subset u} \epsilon_t^{(u)} z_t = 0.
\end{equation}
Here $\epsilon_t^{(u)}=\pm 1$ are the same as in~\eqref{eps}.
\end{theorem}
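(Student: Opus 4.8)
The plan is to establish the two assertions separately. For the cocycle identity~\eqref{3cc}, I would first note that $(\delta z)_u$ is a symmetric bilinear form on the five-dimen\-sional space~$V_u$ of permitted colorings, which is spanned by the edge vectors~$\uppsi_{ij}$ (Subsection~\ref{ss:es}); hence, by bilinearity, it suffices to verify $(\delta z)_u(\uppsi_{ab},\uppsi_{cd})=0$ for every pair of edges $ab,cd\subset u$.

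The crucial point I would exploit is that~$z_t$ is exactly the pairing turning edge vectors into edge functionals on each tetrahedron. Writing $z_t(\mathsf{a},\mathsf{b})=\mathsf{a}^{\mathrm T}M_t\,\mathsf{b}$ with the diagonal matrix~$M_t$ read off from~\eqref{3c}, relation~\eqref{psi-phi} says $(\upphi_{ij}|_t)^{\mathrm T}=M_t\,\uppsi_{ij}|_t$, so that, by symmetry of~$M_t$,
\begin{equation*}
z_t(\uppsi_{ij}|_t,\mathsf{b})=\upphi_{ij}|_t\cdot\mathsf{b}
\end{equation*}
for every color~$\mathsf{b}$ of~$t$ (both sides vanishing when $ij\not\subset t$). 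Summing over the five faces with the coboundary signs, and absorbing each~$\epsilon_t^{(u)}$ into the (unsigned) tetrahedron functional so as to recover the \emph{signed} pentachoron functional of Subsection~\ref{ss:p}, I would obtain $(\delta z)_u(\uppsi_{ab},\uppsi_{cd})=\upphi_{ab}(\uppsi_{cd})$. Since~$\uppsi_{cd}$ is a permitted coloring, it lies in $V_u=\bigcap_{ij}\Ker\upphi_{ij}$, so this quantity vanishes, which proves~\eqref{3cc}.

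For the nontriviality I observe that the hexagon cochain complex~\eqref{hcc} starts in degree~$3$, so there are no nonzero $3$-coboundaries at all; consequently ``not a coboundary'' reduces to the statement that $z\ne 0$ as a cochain. This is immediate, because for generic~$b_{ij}$ (hence generic~$\omega_{ijk}$) the component~$z_{1234}$ is a nonzero bilinear form: its $x_{1234}x'_{1234}$-coefficient $(\omega_{234}-\omega_{134})\omega_{123}\omega_{124}$ does not vanish.

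The hard part will be the sign bookkeeping. I must check that the combinatorial signs~$\epsilon_t^{(u)}$ in the coboundary~\eqref{3cc} match the orientation signs~\eqref{eps} built into the pentachoron edge functionals, so that the signed sum genuinely reconstructs~$\upphi_{ab}(\uppsi_{cd})$ rather than some sign-twisted variant. I would also need to confirm that relation~\eqref{psi-phi}, stated only for the face~$1234$, carries over under $1\mapsto i,\dots,4\mapsto l$ to every tetrahedron~$ijkl$ together with the intrinsically defined form~$z_{ijkl}$ of~\eqref{3c}; this should follow from the symmetry of the whole construction under relabeling of vertices.
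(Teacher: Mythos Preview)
Your proposal is correct and follows essentially the same route as the paper's own proof. Both arguments hinge on reading relation~\eqref{psi-phi} as saying that the diagonal matrix of the bilinear form~$z_t$ converts edge vectors into (transposed) edge functionals, so that $\sum_t \epsilon_t^{(u)} z_t(c,c')$ becomes an edge-functional evaluation on a permitted coloring and hence vanishes; and both derive nontriviality from the absence of $2$-cochains in~\eqref{hcc}. You are somewhat more explicit than the paper---spelling out the matrix~$M_t$, reducing to the spanning set of edge vectors, and noting that ``not a coboundary'' still requires $z\ne 0$---and you correctly flag the sign bookkeeping with~$\epsilon_t^{(u)}$ as the point demanding care, which the paper leaves implicit.
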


\begin{remark}
Recall (see the beginning of Section~\ref{s:h}) that hexagon cohomology depends on a chosen simplicial complex~$K$. In \eqref{3c} and~\eqref{3cc}, however, everything takes place in each pentachoron~$u$ separately, so it is enough to consider just one pentachoron $K=u=\Delta^4$.
\end{remark}

\begin{proof}[Proof of Theorem~\ref{th:3c}]
That \eqref{3c} is not a coboundary, follows simply from the fact that there are no 2-cochains in our theory, see~\eqref{hcc}. Note now that equalities of type~\eqref{psi-phi} mean that, given a permitted coloring~$\begin{pmatrix}x_t\\ y_t\end{pmatrix}_{t\subset u}$ of pentachoron~$u$ (a linear combination of edge vectors), there is a linear functional (a linear combination of edge functionals) that turns \emph{any} permitted coloring~$\begin{pmatrix}x'_t\\ y'_t\end{pmatrix}_{t\subset u}$ into zero, given exactly by the l.h.s.\ of~\eqref{3cc}! We mean of course that we understand~\eqref{3c} as a linear functional acting on colorings~$\begin{pmatrix}x'_t\\ y'_t\end{pmatrix}_{t\subset u}$.
\end{proof}

Possible applications of 3-cocycle~\eqref{3c} will be discussed in Section~\ref{s:d}.

\subsection{A nontrivial symmetric bilinear 4-cocycle}\label{ss:4cn2}

In this subsection, our simplicial complex is a 5-simplex, $K=\Delta^5$. We are going to show that a nontrivial 4-cocycle on~$\Delta^5$ exists already due to the existence of 3-cocycle~\eqref{3c} and the following \emph{dimensional considerations}.

As is known, there is a $\binom{n}{2}=\frac{n(n-1)}{2}$-dimen\-sional space of symmetric bilinear forms over an $n$-dimen\-sional linear space. Hence, here is what the hexagon coboundary operator~$\delta$ looks like for these forms, around the 4-cochains (where we allowed ourself to omit the obvious words ``linearly independent''):
\begin{equation}\label{d4}
\begin{pmatrix}45\\ \text{\small 3-cochains}\end{pmatrix} \xrightarrow[\rank\delta\le 44]{\delta}
\begin{pmatrix}90\\ \text{\small 4-cochains}\end{pmatrix} \stackrel{\delta}\longrightarrow
\begin{pmatrix}45\\ \text{\small 5-cochains}\end{pmatrix} .
\end{equation}
The detailed explanation of~\eqref{d4} is given in the next paragraph.

There are $\binom{9}{2}=45$ linearly independent symmetric bilinear 5-cochains---because the linear space of permitted colorings of~$\Delta^5$ is 9-dimen\-sional, see Theorem~\ref{th:fh} and Subsection~\ref{ss:d9}. Some of these may be coboundaries of 4-cochains, and these (4-cochains) are $6\times 15=90$ (here 6 is the number of pentachora, and $15=\binom{5}{2}$ is the number of symmetric bilinear cochains for one pentachoron). The 4-cochains must be taken here, however, up to the 4-cocycles, and these include coboundaries of the 3-cochains. The number of 3-cochains is $15\times 3=45$ (here 15 is the number of tetrahedra, and $3=\binom{3}{2}$ is the number of symmetric bilinear cochains for one tetrahedron), but as there is one 3-cocycle~\eqref{3c}, the number of their boundaries is~$\le 44$. So, there remain at least $90-44=46$ cochains taken up to adding a coboundary, while there are only~$45$ (to be exact, $\le 45$) conditions for a cochain to be a cocycle.

Hence, there exists a nontrivial symmetric bilinear 4-cocycle on~$\Delta^5$. We now want to have an explicit expression for a representative of its cohomology class. One natural way to obtain it goes as follows.

Begin with taking the coefficients~$\gamma_{ij}$ in the form~\eqref{gi}, except that let $o$ be \emph{finite} (or, even better, let it be an \emph{indeterminate} over our field~$F$). Consider, in this situation, the 3-cochain~$c^{(3)}$ with components $c_{ijkl}^{(3)}=z_{ijkl}$ given by~\eqref{3c}, where $\omega_{ijk}$ is still given by~\eqref{ob}. Cochain~$c^{(3)}$ is no longer a 3-cocycle, but \emph{its coboundary}~$\delta c^{(3)}$ certainly is a \emph{4-cocycle}. Moreover, $\delta c^{(3)}=0$ for $o=0$.

This suggests us that our desired nontrivial 4-cocycle~$z^{(4)}$ may be found in the form of a limit
\begin{equation}\label{o0}
z^{(4)} = \lim_{o\to 0} \frac{\delta c^{(3)}}{o}.
\end{equation}
Indeed, a calculation shows that \eqref{o0} exists---at least in the formal sense---and gives a nontrivial 4-cocycle. Moreover, the nontriviality can be checked using just one pentachoron~$ijklm$: the component~$z_{ijklm}^{(4)}$ turns out to be linearly independent from all the linear combinations of all symmetric bilinear forms $x_t x'_t$, $x_t y'_t + y_t x'_t$ and $y_t y'_t$ for all the five tetrahedra $t\subset ijklm$, and this holds for all field characteristics (even characteristic~2!).

\begin{remark}
The mentioned symmetric bilinear forms span a 14-dimen\-sional linear space: not 15-dimen\-sional due to the linear dependence~\eqref{3cc}. Together with~\eqref{o0}, they span a 15-dimen\-sional space.
\end{remark}

We do not write out an explicit expression for the 4-cocycle~\eqref{o0}, because it has the following shortcoming: it is expressed in terms of simplicial 1-cochain~$b$ (see~\eqref{gi}), while it would look more interesting to have an expression in terms of simplicial 2-cocycle~$\omega$~\eqref{ob}. Moreover, and in more detail, we would prefer to have it as follows:
\begin{itemize}\itemsep 0pt
 \item for each pentachoron $u\subset \Delta^5$, the explicit expression for the $u$-component of such a cocycle should be given in terms of values $\omega_{ijk}$ and variables $x_t$ and~$y_t$, with triangles~$ijk$ and tetrahedra~$t$ belonging to~$u$,
 \item there should be the \emph{same} expressions for all six $u\subset \Delta^5$---that is, one `general' expression for $u=ijklm$ such that the expression for any specific $u=i_u j_u k_u l_u m_u$ is obtained from it just by the substitution $i\mapsto i_u,\ldots,m\mapsto m_u$ (a similar property holds of course for our 3-cocycle~\eqref{3c}).
\end{itemize}
The problem is, however, that we could \emph{not} (as yet) find any cocycle in the cohomology class of~\eqref{o0} and with such an explicit expression---except the case of characteristic~2, see the next Subsection~\ref{ss:4c2}.

\subsection{A nontrivial symmetric bilinear 4-cocycle in characteristic~2: explicit form}\label{ss:4c2}

The situation looks different in characteristic~2: here is an explicit expression for a nontrivial symmetric bilinear 4-cocycle in terms of simplicial 2-cocycle~$\omega$. We denote it~$\zeta$, and write out its component for pentachoron~$12345$; for an arbitrary $u=ijklm$, one has just to substitute $1\mapsto i,\ldots, 5\mapsto m$:
\begin{multline}\label{4c}
 \zeta_{12345} = 
\frac{{\omega_{123}} {\omega_{124}} \left( {\omega_{125}}+{\omega_{123}}\right)  \left( {\omega_{125}}+{\omega_{124}}\right)  \Omega }{\left( {\omega_{124}}+{\omega_{123}}\right)  \left( {\omega_{134}} {\omega_{235}}+{\omega_{135}} {\omega_{234}}\right) }\left( x_{1245}  x'_{1235} +x_{1235}  x'_{1245} \right) \\
+
\frac{{\omega_{123}} \left( {\omega_{125}}+{\omega_{123}}\right)  \left( {\omega_{125}}+{\omega_{124}}\right)  {\omega_{134}} {\omega_{135}} \left( {\omega_{135}}+{\omega_{134}}\right)  }
{{\omega_{134}} {\omega_{235}}+{\omega_{135}} {\omega_{234}}}\left( x_{1345}  x'_{1235} +x_{1235}  x'_{1345} \right) \\
+
\frac{{\omega_{123}^{2}} \left( {\omega_{125}}+{\omega_{123}}\right)  \left( {\omega_{125}}+{\omega_{124}}\right)  \Omega}{\left( {\omega_{124}}+{\omega_{123}}\right)  \left( {\omega_{134}} {\omega_{235}}+{\omega_{135}} {\omega_{234}}\right) } \, x_{1235}  x'_{1235} \\
+
\frac{{\omega_{124}} \left( {\omega_{125}}+{\omega_{123}}\right)  \left( {\omega_{125}}+{\omega_{124}}\right)  {\omega_{134}} {\omega_{135}} \left( {\omega_{135}}+{\omega_{134}}\right) }
{{\omega_{134}} {\omega_{235}}+{\omega_{135}} {\omega_{234}}} \left( x_{1345}  x'_{1245} +x_{1245}  x'_{1345} \right) \\
+
\frac{{\omega_{124}^{2}} \left( {\omega_{125}}+{\omega_{123}}\right)  \left( {\omega_{125}}+{\omega_{124}}\right)  \Omega }{\left( {\omega_{124}}+{\omega_{123}}\right)  \left( {\omega_{134}} {\omega_{235}}+{\omega_{135}} {\omega_{234}}\right) } \, x_{1245}  x'_{1245} \\
+
\frac{\left( {\omega_{125}}+{\omega_{124}}\right)  {\omega_{134}^{2}} {\omega_{135}^{2}} \left( {\omega_{135}}+{\omega_{134}}\right) }{{\omega_{134}} {\omega_{235}}+{\omega_{135}} {\omega_{234}}} \, x_{1345} x'_{1345} \\
+
\frac{{\omega_{234}} {\omega_{235}} \left( {\omega_{235}}+{\omega_{234}}\right)  \left( \Omega+{\omega_{125}} {\omega_{134}} {\omega_{135}}+{\omega_{124}} {\omega_{134}} {\omega_{135}}\right) }
{{\omega_{134}} {\omega_{235}}+{\omega_{135}} {\omega_{234}}} \, x_{2345}  x'_{2345} 
 \\
 +\omega_{123} \omega_{234} \omega_{235} (\omega_{245} + \omega_{345}) \, x_{2345} x'_{2345} \\
 + (\omega_{245} + \omega_{145}) (\omega_{245} + \omega_{345}) \omega_{245} \omega_{345} \, y_{2345} y'_{2345} ,
\end{multline}
where
\begin{equation}\label{4c'}
\Omega = \omega_{124}\omega_{125}\omega_{135}+\omega_{123}\omega_{125}\omega_{135}+\omega_{124}\omega_{125}\omega_{134}+\omega_{123}\omega_{124}\omega_{134}.
\end{equation}

As soon as the explicit form~\eqref{4c},~\eqref{4c'} has been written, the cocycle property, that is,
\begin{equation}\label{4cc}
\zeta_{12345}+\zeta_{12346}+\zeta_{12356}+\zeta_{12456}+\zeta_{13456}+\zeta_{23456} = 0,
\end{equation}
can be checked directly. The nontriviality can be checked just within pentachoron~$12345$, as we have already explained (after formula~\eqref{o0}).

\begin{remark}
Remember that we are in characteristic~2, where there is no need in any sign changing in formulas like~\eqref{4cc}, as well as \eqref{4c} and~\eqref{4c'}.
\end{remark}

\begin{remark}
Expression~\eqref{4c},~\eqref{4c'} is not claimed to be the most elegant form of a cocycle representing its cohomology class.
\end{remark}

Possible applications of 4-cocycle~\eqref{4c},~\eqref{4c'} will be discussed in Section~\ref{s:d}.

\section{Relation to fermionic Gaussian weights}\label{s:fermionic}

Remarkably, the permitted colorings in our `infinitesimal' case have a simple relationship with \emph{fermionic quasi-Gaussian pentachoron weights} considered in~\cite{full-nonlinear}. Below in this Section, we work in the field $F=\mathbb C$ of complex numbers.

\subsection{Scalar product on pentachoron colorings and permitted colorings as a maximal isotropic subspace}

Let $u$ be a pentachoron, and let $c=\begin{pmatrix}x_t\\ y_t \end{pmatrix}_{t\subset u}$ and $c'=\begin{pmatrix}x'_t\\ y'_t \end{pmatrix}_{t\subset u}$ be its two colorings (permitted or not). We introduce a \emph{scalar product} between them as the l.h.s.\ of~\eqref{3cc}, that is,
\begin{equation}\label{sc-pr}
\langle c, c' \rangle = \sum_{t=ijkl\subset u} \epsilon_t^{(u)} (\omega_{jkl}-\omega_{ikl}) \bigl(\omega_{ijk}\omega_{ijl}x_{ijkl}x'_{ijkl} - \omega_{ikl}\omega_{jkl}y_{ijkl}y'_{ijkl} \bigr).
\end{equation}
Then it follows from~\eqref{3cc} that the permitted pentachoron colorings form an \emph{isotropic linear subspace} in the ten-dimen\-sional complex Euclidean space~$V$ of all colorings. As it is five-dimen\-sional, it is also \emph{maximal}.

\subsection{Building fermionic operators and pentachoron weight}\label{ss:u}

As \eqref{sc-pr} is a sum over five tetrahedra, our complex Euclidean space~$V$ is also the direct sum of five complex Euclidean spaces corresponding to these tetrahedra:
\[
V=\bigoplus_{t\subset u} V_t\,,\quad \text{where}\quad V_t\ni \begin{pmatrix}x_t\\ y_t \end{pmatrix}.
\]

We now choose, in each~$V_t$, a new basis consisting of two \emph{isotropic} vectors; we like to denote them $d_t$ and~$e_t$:
\begin{equation}\label{ddee}
\langle d_t, d_t \rangle = \langle e_t, e_t \rangle = 0.
\end{equation}
We also normalize them so that their scalar product is~$\epsilon_t^{(u)}$:
\begin{equation}\label{de}
\langle d_t, e_t \rangle = \epsilon_t^{(u)}.
\end{equation}

\begin{ir}
Normalization~\eqref{de} means that $d_t=\begin{pmatrix}x_t^d\\ y_t^d \end{pmatrix}$ and $e_t=\begin{pmatrix}x_t^e\\ y_t^e \end{pmatrix}$, \ $t=ijkl$, are such that
\begin{equation}\label{det}
(\omega_{jkl}-\omega_{ikl}) \bigl(\omega_{ijk}\omega_{ijl}x_{ijkl}^d x_{ijkl}^e - \omega_{ikl}\omega_{jkl}y_{ijkl}^d y_{ijkl}^e \bigr) = 1.
\end{equation}
As $\epsilon_t^{(u)}$ does not enter in~\eqref{det}, this condition deals only with the tetrahedron~$t$ and does not depend on the choice of pentachoron~$u\supset t$ (if there are more than one of these). The same applies to~\eqref{ddee}.
\end{ir}

Any element $v\in V$ can be decomposed over the basis vectors $d_t$ and~$e_t$; we like to denote the respective coefficients as $\beta_t^{(u)}$ and~$\epsilon_t^{(u)}\gamma_t^{(u)}$:
\begin{equation}\label{v}
v = \sum_{t\subset u} (\beta_t^{(u)} d_t + \epsilon_t^{(u)}\gamma_t^{(u)} e_t).
\end{equation}
It follows from \eqref{ddee} and~\eqref{de} that the scalar product of two such elements $v$ and~$v'$ is
\begin{equation}\label{vv'}
\langle v, v' \rangle = \sum_{t\subset u} (\beta_t \gamma'_t + \beta'_t \gamma_t),
\end{equation}
where we omitted the superscript~$(u)$ for a moment; instead, we marked by a prime the coefficients belonging to~$v'$. We now note that \eqref{vv'} is nothing but the scalar product~\cite[(5)]{full-nonlinear} for linear Grassmann differential operators of the form~\cite[(4)]{full-nonlinear}, that is, in our case,
\begin{equation}\label{Gl}
\sum_{t\subset u} (\beta_t^{(u)} \partial_t + \gamma_t^{(u)} \vartheta_t).
\end{equation}
In~\eqref{Gl}, we have attached a \emph{Grassmann algebra generator}~$\vartheta_t$ to each tetrahedron~$t$, while $\partial_t = \partial / \partial \vartheta_t $ is the corresponding left differentiation (in~\cite{full-nonlinear}, notation~$x_t$ is used instead of~$\vartheta_t$). The scalar product of two operators~\eqref{Gl} is simply their anticommutator (a scalar operator identified with the corresponding scalar).

\begin{remark}
We don't give here an exposition of the theory of Grassmann algebras and Berezin integral (the latter is needed when we glue pentachora together). The reader can consult the books~\cite{B,B-super}. Or, a very brief account can be found, for instance, in~\cite[Section~2]{full-nonlinear}.
\end{remark}

We now put in correspondence to a permitted pentachoron~$u$ coloring the corresponding Grassmann operator~\eqref{Gl}. Then, a maximal isotropic space of Grassmann operators corresponds to all permitted colorings, and there is, up to a nonvanishing scalar factor, exactly one (nonzero) Grassmann algebra element~$\mathcal W_u$ \emph{annihilated} by any operator in this maximal isotropic space, called `quasi-Gaussian 4-simplex weight' in~\cite{full-nonlinear}.

\begin{remark}
The existence and uniqueness of~$\mathcal W_u$ (up to a nonzero factor) is of course a classical fact, see~\cite{Ch}, where one can find also many other interesting facts about Grassmann algebras. Or, a simple proof suited for our situation can be found in~\cite[Theorem~2]{KS2013}.
\end{remark}

\subsection{Comparing with previously known Grassmann weights parameterized by 2-cocycles}\label{ss:comp}

In~\cite{full-nonlinear}, a Grassmann pentachoron weight has been constructed corresponding to a given 2-cocycle~$\omega$ on the pentachoron, that is, the set of (generic) complex numbers~$\omega_{ijk}$ attached to each triangle~$ijk$, \ $i<j<k$, and such that
\begin{equation}\label{om}
\omega_{ijk}-\omega_{ijl}+\omega_{ikl}-\omega_{jkl}=0.
\end{equation}
We now want to show that our~$\mathcal W_u$ is, essentially, the same as the mentioned weight.

To be exact, recall that the weight in~\cite{full-nonlinear} was determined by~$\omega$ to within \emph{gauge transformations}, see~\cite[Definition~9]{full-nonlinear}. So, what we are going to show is that our~$\mathcal W_u$ is one possible weight within the relevant gauge equivalence class.

\begin{remark}
Surely, \eqref{om} follows from~\eqref{ob}. Also, our 1-cochain~$b$ in \eqref{gi} and~\eqref{ob} is, essentially, the same object as~$\nu$ in~\cite[(27)]{full-nonlinear}.
\end{remark}

According to Subsection~\ref{ss:u}, every permitted coloring~\eqref{v} yields Grassmann differential operator~\eqref{Gl}. Then, a permitted coloring affecting only the tetrahedra containing a given edge---edge vector, in our terminology---yields an \emph{edge operator} in the terminology of~\cite{full-nonlinear}.

We now see that our linear relations \eqref{eps-psi} and~\eqref{b-psi} yield the same relations for edge operators as in~\cite{full-nonlinear} (for the same~$\omega$, of course). Namely, compare \eqref{eps-psi} with the two unnumbered relations between \cite[(25)]{full-nonlinear} and~\cite[(26)]{full-nonlinear}, and \eqref{b-psi} with \cite[(26), (27)]{full-nonlinear}. Finally, it's exactly these linear relations that determine the gauge equivalence class mentioned above, see~\cite[Theorem~4]{full-nonlinear}.

Our specific form of weight~$\mathcal W_u$ implies a specific gauge choice (within the gauge equivalence class). As going into details of the fermionic case is not the aim of the present paper, we only mention here without a proof that such weights work well for Pachner moves: they satisfy the `3--3 relation'~\cite[(53)]{full-nonlinear} and, moreover, their form may be more convenient for calculating the 4-manifold `fermionic' invariant defined in~\cite{gcoi,gcoii} than the form used in those works.

\section{Discussion}\label{s:d}

One possible direction of further work is the generalization of our nonconstant hexagon relations and their cohomologies onto $(n+2)$-gon relations for all integer $n\ge 3$, having in mind their possible applications to piecewise linear $n$-manifolds.

On the other hand, it will be interesting to make direct calculations in the four-dimen\-sional case. The most promising looks the `infinitesimal' case of Section~\ref{s:fi}. We explain briefly what can be expected here.

Suppose we have a \emph{pair} ``PL 4-manifold~$M$, a middle cohomology class $h\in H^2(M,F)$'', for some field~$F$. We can take a triangulation of~$M$, choose a suitable simplicial 2-cocycle~$\omega$ representing class~$h$, and introduce the corresponding linear space~$V_u$ of permitted colorings for each pentachoron~$u$ in the triangulation. We expect that our further constructions, specifically multiplications \eqref{vv3} and~\eqref{vv4}, will depend only on the cohomology class~$h$ of~$\omega$, because similar thing happens in the fermionic case, see~\cite[Theorem~7]{gcoii}.

\paragraph{Possible application of the nontrivial 3-cocycle. }Suppose there are two permitted colorings of our triangulated~$M$ (recall that these are such colorings that their restrictions onto any pentachoron are permitted), whose $t$-components are denoted $\begin{pmatrix}x_t \\ y_t\end{pmatrix}$ and $\begin{pmatrix}x'_t \\ y'_t\end{pmatrix}$, respectively. Then expression~\eqref{3c} can be treated as their \emph{product}, whose result is, due to~\eqref{3cc}, a \emph{simplicial 3-cocycle}. We expect that this will yield a multiplication
\begin{equation}\label{vv3}
V \times V \to H^3(M,F),
\end{equation}
where
\begin{equation}\label{h/p}
V=\Ker \Phi / \LinearImage \psi
\end{equation}
is an `exotic homology' group, see~\eqref{c}. Factorizing by~$\LinearImage \psi$ in~\eqref{h/p} is expected due to the analogy with~\cite[Theorem~5]{cubic}.

\paragraph{Possible application of the nontrivial 4-cocycle. }Similarly, for field~$F$ of characteristic~2, \emph{exotic homology pairing} is expected to arise: $V \times V \to H^4(M,F)$, or, for a connected~$M$,
\begin{equation}\label{vv4}
V \times V \to F.
\end{equation}

\begin{ir}
Recall that the exotic homology itself depends on (the cohomology class of) 2-cocycle~$\omega$!
\end{ir}

\end{document}